\newtheorem{theorem}{Theorem}%[section]
\newtheorem{corollary}{Corollary} %[section]
\newtheorem{lemma}{Lemma}%[section]
\theoremstyle{definition}
\theoremstyle{remark}
\newenvironment{claim}[1]{\par\medskip\noindent\underline{Claim:}\space#1}{}
\newenvironment{claimproof}[1]{\par\medskip\noindent\underline{Proof of Claim:}\space#1}{\hfill
$\diamond$}
\newcommand{\problemtitle}[1]{\gdef\@problemtitle{#1}}% Store problem title
\newcommand{\probleminput}[1]{\gdef\@probleminput{#1}}% Store problem input
\newcommand{\problemquestion}[1]{\gdef\@problemquestion{#1}}% Store problem question
  \par\addvspace{.5\baselineskip}
  \par\addvspace{.5\baselineskip}
\newcommand{\nn}{\mathbb{N}}
\newcommand{\X}{\mathcal{X}}
\newcommand{\cU}{\mathcal{U}}
\title{Robust Combinatorial Optimization with Locally Budgeted Uncertainty}
    \author[1]{Marc Goerigk\footnote{marc.goerigk@uni-siegen.de}}
    \author[2]{Stefan Lendl\footnote{lendl@math.tugraz.at}}
	\affil[1]{Network and Data Science Management, University of Siegen, Siegen, Germany}	
	\affil[2]{Institute of Discrete Mathematics, Graz University of Technology, Graz, Austria}
	\date{}
\begin{document}

\maketitle

\begin{abstract} 
Budgeted uncertainty sets have been established as a major influence on uncertainty modeling for robust optimization problems. A drawback of such sets is that the budget constraint only restricts the global amount of cost increase that can be distributed by an adversary. Local restrictions, while being important for many applications, cannot be modeled this way.

We introduce new variant of budgeted uncertainty sets, called locally budgeted uncertainty. In this setting, the uncertain parameters become partitioned, such that a classic budgeted uncertainty set applies to each partition, called region.

In a theoretical analysis, we show that the robust counterpart of such problems for a constant number of regions remains solvable in polynomial time, if the underlying nominal problem can be solved in polynomial time as well. If the number of regions is unbounded, we show that the robust selection problem remains solvable in polynomial time, while also providing hardness results for other combinatorial problems.

In computational experiments using both random and real-world data, we show that using locally budgeted uncertainty sets can have considerable advantages over classic budgeted uncertainty sets.
\end{abstract}

\textbf{Keywords:} robust optimization; combinatorial optimization; budgeted uncertainty

\section{Introduction}\label{sec:intro}

% \cite{busing2011phd}, \cite{hradovich2017recoverable} 

We consider nominal combinatorial optimization problems of the form
\begin{align*}
\min\ & \pmb{c}^t \pmb{x} \\
\text{s.t. } & \pmb{x}\in\X
\end{align*}
where $\X\subseteq\{0,1\}^n$ is the set of feasible solutions. For uncertain cost coefficients $\pmb{c}\in\cU$, robust optimization approaches have been analyzed. To this end, one assumes that a set $\cU$ of possible cost realizations is given by a decision maker or derived from historical data. The set $\cU$ is referred to as the uncertainty set. The (min-max) robust counterpart is then to solve
\[ \min_{\pmb{x}\in\X}\ \max_{\pmb{c}\in\cU}\ \pmb{c}^t \pmb{x} \]
Different possibilities to model the set $\cU$ have been proposed. One straight-forward possibility is to use a discrete set of scenarios $\cU = \{ \pmb{c}^1,\ldots,\pmb{c}^N\}$, i.e., to list all possible outcomes explicitly. While this approach is flexible, it usually results in NP-hard robust optimization problems, even if the nominal problem can be solved in polynomial time (see \cite{kouvelis2013robust,aissi2009min,kasperski2016robust} for overviews). Also, implicit descriptions of the uncertainty set can lead to exponential-sized equivalent discrete uncertainty sets.

A popular alternative are budgeted uncertainty sets of the form
\begin{equation}
\cU = \left\{ \pmb{c} = \underline{\pmb{c}} + \pmb{\delta} : \delta_i\in[0,d_i]\ \forall i\in[n],\ \sum_{i\in[n]} \delta_ i \le \Gamma \right\}   \label{classic}
\end{equation}
as first introduced in \cite{bertsimas2003robust,bertsimas2004price}. Here we use the notation $[n]=\{1,\ldots,n\}$. For every item $i\in[n]$, we are given a lower bound on the costs $\underline{c}_i$, as well as a possible maximum cost deviation $d_i$. Additionally, there is a budget $\Gamma$ on the total increase of costs over the lower bound. Advantages of this set include its intuitive
description for a decision maker, and that robust counterparts remain efficiently solvable for nominal problems that can be solved efficiently, even though the budgeted uncertainty set has an exponential number of extreme points. These benefits have lead to a substantial amount of research into robust optimization problems with budgeted uncertainty sets, see, e.g., \cite{alves2015robust,hansknecht2018fast,chassein2018recoverable,bougeret2019robust,chassein2019faster} and many more.

But there are also limitations to this approach, which has lead to the development of alternative uncertainty sets. These include multi-band uncertainty \cite{busing2012new}, variable budgeted uncertainty \cite{poss2013robust}, and knapsack uncertainty \cite{poss2018robust}.

To the best of our knowledge, no previous work has considered avoiding the potential problem that the constraint $\sum_{i\in[n]} \delta_ i \le \Gamma$ denotes a global budget over all uncertain parameters. For various applications, multiple local budgets are more desirable. As examples, consider
 multi-period problems, where every period has its own budget limitation,
routing problems, where separate budgets apply to geographic regions or types of roads,
and portfolio problems, where uncertainty budgets are restricted to asset classes or sectors.

In this paper we introduce a new type of budgeted uncertainty set, where budgets
apply locally to their respective regions. These sets are of the form
\[ \cU = \left\{ \pmb{c} = \underline{\pmb{c}} + \pmb{\delta} : \delta_i \in [0,d_i]\ \forall i\in[n],\ \sum_{i\in P_j} \delta_i \le \Gamma_j\ \forall j\in[K] \right\} \]
where $P_1\cup P_2 \cup \ldots \cup P_K = [n]$ denotes a partition of the items. Each set $P_j$ is called a region. In this approach, every region has a separate budget constraint, which models the
local uncertainty. Note that this definition of uncertainty is a generalization of the classic definition~\eqref{classic}, which can be recovered by using $K=1$.

Our contributions are as follows. For min-max problems with locally budgeted uncertainty, we first derive a compact formulation in Section~\ref{sec:compact}. Based on this formulation, we then consider the case of a constant number of regions in Section~\ref{sec:constant}. We show that the robust problem remains solvable in polynomial time, if it is possible to solve the nominal problem in polynomial time. For an unbounded number of regions, the selection problem remains solvable in polynomial time, while this is not the case for the representative selection problem (see Section~\ref{sec:unbounded}). We conclude that also the spanning tree problem, the $s$-$t$-min-cut problem, and the shortest path problem become NP-hard. Additionally, we can exclude the possibility of parameterized algorithms with running time in $O^*(2^{o(K)})$. In Section~\ref{sec:experiments}, we present three computational experiments using locally budgeted uncertainty sets. In all experiments, we compare locally budgeted uncertainty to the classic budgeted uncertainty approach. While the first two experiments use randomly generated data, the third experiment is based on real-world data for robust shortest path problems. Section~\ref{sec:conclusions} concludes the paper and points out further questions.

\section{Theoretical Results}

% \subsubsection{The Adversarial Problem}

\subsection{A Compact Formulation}
\label{sec:compact}

Let some solution $\pmb{x}\in\X$ be fixed. Its objective value is then determined by solving the adversarial problem
\[ \max_{\pmb{c}\in\cU}\ \pmb{c}^t\pmb{x} \]
that is, by choosing a scenario $\pmb{c}$ that maximizes the costs of $\pmb{x}$. Using the definition of locally budgeted uncertainty, this is equivalent to solving the following linear program:
\begin{align}
\max\ & \sum_{i\in[n]} (\underline{c}_i + \delta_i) x_i \\
\text{s.t. } & \sum_{i\in P_j} \delta_i \le \Gamma_j & \forall j\in[K] \label{adv:con1}\\
& \delta_i \le d_i & \forall i\in[n] \label{adv:con2} \\
& \delta_i \ge 0  & \forall i\in[n]
\end{align}
By strong duality, we can dualize this linear program to find another linear program with the same optimal objective value. Furthermore, any feasible solution to the dual problem gives an upper bound to the objective value of the primal problem. Using the dual, we hence find the following compact problem formulation for the min-max problem with locally budgeted uncertainty.
\begin{align}
\min\ & \sum_{j\in[K]} \left( \Gamma_j \pi_j + \sum_{i\in P_j} d_i \rho_i + \sum_{i\in P_j} \underline{c}_i x_i \right) \label{compactstart}\\
\text{s.t. } & \pi_j + \rho_i \ge x_i & \forall j\in[K], i\in P_j \\
& \pi_j \ge 0 & \forall j\in [K] \\
& \rho_i \ge 0 & \forall i\in[n] \\
& \pmb{x} \in \X \label{compactend}
\end{align}
Recall that $\X$ represents the set of feasible solutions for the underlying combinatorial problem. Variables $\pi_j$ are the duals of Constraints~\eqref{adv:con1}, and variables $\rho_i$ are the duals of Constraints~\eqref{adv:con2}.

\subsection{Constant Number of Regions}
\label{sec:constant}

We first consider the case that the number of regions $K$ is a constant value.
% \subsubsection{A Fixed Parameter Tractable Meta-Algorithm}
% We reconsider the compact formulation. 
Note that, in an optimal solution, we can assume that $\rho_i = [x_i - \pi_j]_+$, where $[y]_+=\max\{0,y\}$ denotes the positive part of $y$. 

\begin{lemma}\label{pilemma}
There is an optimal solution to Problem~(\ref{compactstart}-\ref{compactend}), where $\pi_j \in \{0,1\}$ for all $j\in[K]$.
\end{lemma}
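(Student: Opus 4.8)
The plan is to exploit the observation recorded just before the statement, namely that in any optimal solution we may take $\rho_i = [x_i - \pi_j]_+$ for the unique region $j$ with $i\in P_j$. Because $P_1,\ldots,P_K$ form a partition, each dual variable $\rho_i$ is tied to exactly one $\pi_j$, and $\pi_j$ itself appears only in the $j$-th summand of the objective. Consequently, once a feasible $\pmb{x}\in\X$ is fixed, the objective decouples completely across regions, and it suffices to argue region by region: for each $j$ we minimize
\[ f_j(\pi_j) = \Gamma_j \pi_j + \sum_{i\in P_j} d_i\,[x_i - \pi_j]_+ \]
over $\pi_j \ge 0$, with $\rho_i$ substituted out.

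First I would use that $\pmb{x}\in\X\subseteq\{0,1\}^n$, so $x_i\in\{0,1\}$ and $\pi_j\ge 0$ give $[x_i-\pi_j]_+ = x_i\,[1-\pi_j]_+$. Setting $D_j = \sum_{i\in P_j} d_i x_i$, the per-region cost becomes $f_j(\pi_j) = \Gamma_j \pi_j + D_j\,[1-\pi_j]_+$. Here $\Gamma_j\ge 0$ and $D_j\ge 0$, while $[1-\pi_j]_+$ is convex and piecewise linear with a single breakpoint at $\pi_j=1$; hence $f_j$ is convex and piecewise linear on $[0,\infty)$ with that one breakpoint.

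The key step is then the elementary fact that a convex piecewise-linear function of one variable attains its minimum over $[0,\infty)$ either at the boundary point $0$ or at a breakpoint, here $1$. Concretely, on $[1,\infty)$ the function reduces to $\Gamma_j\pi_j$, which is nondecreasing, so no value below $\pi_j=1$ is lost there; on $[0,1]$ it is affine, so its minimum occurs at an endpoint $0$ or $1$. Replacing each $\pi_j$ by whichever of $0$ or $1$ gives the smaller $f_j$ therefore does not increase the objective, and the induced $\rho_i = [x_i-\pi_j]_+$ stay feasible. This yields an optimal solution with $\pi_j\in\{0,1\}$ for all $j$.

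I do not expect a genuine obstacle. The only points requiring care are that the partition structure truly decouples the $\pi_j$, so that rounding within one region cannot affect another, and that substituting $\rho_i=[x_i-\pi_j]_+$ is without loss of optimality, which is precisely the remark preceding the lemma. Everything else is the standard observation that optima of convex piecewise-linear univariate functions sit at breakpoints.
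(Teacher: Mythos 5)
Your proposal is correct and follows essentially the same route as the paper: fix $\pmb{x}$, substitute out $\rho_i$, decompose the objective over regions, and observe that each per-region function is affine on $[0,1]$ and nondecreasing beyond $1$, so the minimum sits at $\pi_j\in\{0,1\}$. Your convex piecewise-linear phrasing also cleanly absorbs the paper's separate remark about the $\Gamma_j=0$ case, since $\Gamma_j\pi_j$ is nondecreasing on $[1,\infty)$ regardless.
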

\begin{proof}
Let us assume that $\pmb{x}\in\X$ is fixed. Let $X = \{ i\in[n] : x_i = 1\}$ denote the set of items taken by solution $\pmb{x}$. The problem then decomposes to:
\begin{align}
& \min_{\pmb{\pi} \ge \pmb{0}} \sum_{j\in[K]} \left( \Gamma_j \pi_j + \sum_{i\in P_j} d_i \rho_i + \sum_{i\in P_j} \underline{c}_i x_i \right) \\
= & \sum_{i\in[n]} \underline{c}_i x_i + 
\sum_{j\in[K]} \min_{\pi_j \ge 0} \left( \Gamma_j\pi_j + \sum_{i\in P_j} d_i[x_i - \pi_j]_+ \right) \\
= & \sum_{i\in X} \underline{c}_i 
+ \sum_{j\in[K]} \min_{\pi_j \in[0,1]} \left( \Gamma_j\pi_j + \sum_{i\in P_j\cap X} d_i(1 - \pi_j) \right) \label{lemma1con}
\end{align}
Note that the equivalence~\eqref{lemma1con} follows as increasing any variable $\pi_j$ beyond 1 can never be optimal for $\Gamma_j > 0$. If $\Gamma_j=0$, then setting $\pi_j=1$ gives the same value as setting $\pi_j > 1$.
We can conclude that there is an optimal solution with $\pi_j \in \{0,1\}$ for all $j\in[K]$. 
\end{proof}

\begin{theorem}\label{decomp-theorem}
The robust problem with locally budgeted uncertainty (\ref{compactstart}-\ref{compactend}) can be decomposed into $2^K$ subproblems of nominal type. In particular, if $K$ is a constant and the nominal problem can be solved in polynomial time, Problem~(\ref{compactstart}-\ref{compactend}) can be solved in polynomial time as well.
\end{theorem}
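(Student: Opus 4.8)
The plan is to enumerate the binary choices for $\pmb{\pi}$ guaranteed by Lemma~\ref{pilemma} and to show that each such choice collapses the compact formulation into a single nominal problem with modified costs. First I would fix an arbitrary vector $\pmb{\pi}\in\{0,1\}^K$; by Lemma~\ref{pilemma} it suffices to range over these $2^K$ vectors, since an optimal solution with $\pi_j\in\{0,1\}$ for all $j\in[K]$ exists. For a fixed $\pmb{\pi}$ the term $\sum_{j\in[K]}\Gamma_j\pi_j$ becomes an additive constant, and the residual optimization is over $\pmb{\rho}\ge\pmb{0}$ and $\pmb{x}\in\X$ subject to $\rho_i\ge x_i-\pi_j$ for $i\in P_j$.

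The key step is to eliminate the $\rho$ variables. Since each $d_i\ge 0$ and $\rho_i$ enters the objective only with a nonnegative coefficient, the optimal choice is $\rho_i=[x_i-\pi_j]_+$ for $i\in P_j$, as already observed before Lemma~\ref{pilemma}. Because the $P_j$ form a partition, each $\rho_i$ is constrained by exactly one region, so no cross-region interaction survives. For binary $x_i$ and $\pi_j$ this yields $\rho_i=x_i$ when $\pi_j=0$ and $\rho_i=0$ when $\pi_j=1$; thus the contribution $d_i\rho_i$ equals $d_i x_i$ precisely when $\pi_j=0$ and vanishes otherwise.

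Substituting these values turns the objective into $\sum_{j\in[K]}\Gamma_j\pi_j + \sum_{i\in[n]}\hat c_i x_i$, where the modified cost is $\hat c_i=\underline{c}_i+d_i$ for items $i$ lying in a region with $\pi_j=0$ and $\hat c_i=\underline{c}_i$ otherwise. Apart from the additive constant $\sum_{j\in[K]}\Gamma_j\pi_j$, this is exactly a nominal problem $\min_{\pmb{x}\in\X}\hat{\pmb{c}}^t\pmb{x}$ over the original feasible set $\X$. I would therefore solve this nominal instance for each of the $2^K$ vectors $\pmb{\pi}$, add the corresponding constant, and return the overall minimum. Correctness follows because Lemma~\ref{pilemma} guarantees that some binary $\pmb{\pi}$ attains the optimum of the compact formulation, and for each such $\pmb{\pi}$ the reduction above is exact.

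For the complexity claim, if $K$ is constant then there are $O(1)$ subproblems, each a nominal instance solvable in polynomial time by assumption, so the total running time is polynomial. I do not expect a serious obstacle here: the only points requiring care are verifying that the optimal $\rho_i$ are exactly the stated binary-collapsed values and that the partition structure prevents any coupling between regions, both of which are immediate from the single constraint $\pi_j+\rho_i\ge x_i$ per item.
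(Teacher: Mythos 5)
Your proposal is correct and follows essentially the same route as the paper: enumerate the $2^K$ binary vectors $\pmb{\pi}$ justified by Lemma~\ref{pilemma}, substitute $\rho_i=[x_i-\pi_j]_+$, and observe that each choice yields a nominal problem with costs $\underline{c}_i+d_i$ on regions with $\pi_j=0$ and $\underline{c}_i$ on regions with $\pi_j=1$, plus the constant $\sum_{j:\pi_j=1}\Gamma_j$. No gaps.
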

\begin{proof}
By Lemma~\ref{pilemma}, we can assume every variable $\pi_j$ to be either 0 or 1. We guess these values. There are $K$ variables $\pi_j$, and thus $2^K$ combinations are possible. For fixed $\pmb{\pi}=(\pi_1,\ldots,\pi_K)$, denote by $\Pi\subseteq[K]$ the set of indices $j$ where $\pi_j=1$. The problem then becomes
\begin{align*}
& \min_{\pmb{x}\in\X} \sum_{j\in[K]} \left( \Gamma_j \pi_j + \sum_{i\in P_j} d_i [x_i - \pi_j]_+ + \sum_{i\in P_j} \underline{c}_i x_i \right) \\
= & \sum_{j\in \Pi} \Gamma_j 
+ \min_{\pmb{x}\in\X} \left( \sum_{j\in \Pi} \sum_{i\in P_j} \underline{c}_i x_i + \sum_{j\in [K]\setminus \Pi}  \sum_{i\in P_j} (\underline{c}_i + d_i) x_i \right)
\end{align*}
This is a problem of nominal type, and the claim follows.
\end{proof}

% \subsubsection{Improving the FPT constant for the representative selection problem}

\subsection{Unbounded Number of Regions}
\label{sec:unbounded}

We now consider the case that the number of regions $K$ is not a constant, but part of the problem input.

\subsubsection{Hardness Results}

We first consider the representative selection problem, where
\[ \X = \left\{ \pmb{x}\in\{0,1\}^n : \sum_{i\in T_\ell} x_i = p_\ell\ \forall \ell\in [L] \right\} \]
for a partition $T_1\cup T_2 \cup \ldots T_L = [n]$ and integers $p_\ell$ for all $\ell\in[L]$ (see, e.g., \cite{deineko2013complexity}).

\begin{theorem}\label{thm:nphard}
The robust representative selection problem with locally budgeted uncertainty and arbitrary $K$ is APX-hard, 
even if $|T_\ell| = 2$, $p_\ell=1$ for all $\ell\in[L]$, and the for the regions it holds
that $|P_j| \leq 3$ for all $j \in [K]$.
\end{theorem}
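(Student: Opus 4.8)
The plan is to prove APX-hardness by a reduction from a known APX-hard problem. Since we must show hardness even in the highly restricted setting where each $T_\ell$ has exactly two elements with $p_\ell=1$ (so each representative constraint is a binary choice between two items) and each region $P_j$ contains at most three items, the natural source problem is \textsc{Max-2-Sat} or, more conveniently given the graph-like structure, a bounded-degree variant such as \textsc{Max-E3-Sat} or \textsc{Vertex Cover} on bounded-degree graphs. My first instinct is to reduce from a constraint-satisfaction problem whose variables correspond to the binary choices in the selection structure: each pair $T_\ell=\{a_\ell,b_\ell\}$ encodes a Boolean variable, where $x_{a_\ell}=1$ means \emph{true} and $x_{b_\ell}=1$ means \emph{false}. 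The objective of the robust selection problem, as made concrete by Theorem~\ref{decomp-theorem}, is governed by which items get their cost boosted by $d_i$ versus protected by the region budgets, so the key is to design regions $P_j$ and budgets $\Gamma_j$ so that each region penalizes the adversary exactly when a clause is satisfied (or violated).

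The key steps, in order, would be as follows. First, fix the source instance: I would take an instance of a bounded-occurrence APX-hard problem so that the resulting regions have bounded size; the constraint $|P_j|\le 3$ strongly suggests encoding clauses of size at most two or three, or encoding edges of a bounded-degree graph, so that only a constant number of items land in each region. Second, build the selection gadget: one pair $T_\ell$ per Boolean variable, giving the clean $|T_\ell|=2,\ p_\ell=1$ structure. Third, design the cost data $\underline{c}_i$, $d_i$, and region budgets $\Gamma_j$ so that, after applying the decomposition of Lemma~\ref{pilemma} and Theorem~\ref{decomp-theorem} (where setting $\pi_j=1$ ``pays'' $\Gamma_j$ to shield region $P_j$ while $\pi_j=0$ exposes the items to their full $d_i$ increase), the optimal robust objective value is an affine function of the number of satisfied clauses. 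The heart of the construction is choosing $\Gamma_j$ for each clause-region so that paying to protect the region is worthwhile precisely when the corresponding clause is satisfied by the chosen assignment, so that minimizing the robust objective corresponds to maximizing the number of satisfied clauses. Fourth, verify the approximation-preserving property: I would show that a solution to the robust instance within factor $(1+\eps)$ of optimal yields an assignment satisfying within a corresponding factor of the optimum number of clauses, so that the APX-hardness of the source transfers.

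The main obstacle I anticipate is the interaction between the local structure and the global optimization, namely making the reduction \emph{gap-preserving} rather than merely many-one. A straightforward many-one reduction showing NP-hardness is relatively easy: one only needs the optimal objective to cross a threshold iff the formula is satisfiable. But APX-hardness requires that the additive or multiplicative gap in the number of satisfied clauses translates into a proportional gap in the robust objective, and this is delicate because the robust objective contains a large ``baseline'' term $\sum_{i\in X}\underline{c}_i$ (and the $\sum_{j\in\Pi}\Gamma_j$ term) that can dilute the contribution of the clause-satisfaction signal. I would therefore need to keep the baseline costs small relative to the variable part, or argue that the baseline is a fixed constant independent of the assignment, so that the gap is preserved up to constant factors. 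A secondary subtlety is ensuring the region-size bound $|P_j|\le 3$ is genuinely respected: if a Boolean variable appears in many clauses, the items $a_\ell,b_\ell$ would need to appear in many regions, but since regions partition $[n]$ each item lies in exactly one region, so I would have to introduce auxiliary copy-items and equality-enforcing pairs (more $T_\ell$ gadgets) to propagate a variable's truth value across multiple clause-regions without blowing up region sizes --- and I would need to check that this copying machinery does not itself distort the approximation gap.
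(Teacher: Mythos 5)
There is a genuine gap: your write-up is a plan that correctly names the obstacles but does not overcome them, and the concrete encoding you sketch (pairs $T_\ell$ as Boolean variables, regions $P_j$ as clauses) runs head-on into both. First, because the regions \emph{partition} $[n]$, an occurrence of a variable can live in only one region, and the copy/equality gadgets you propose to propagate a truth value across several clause-regions cannot be built from $|T_\ell|=2$, $p_\ell=1$ pairs: the adversary only ever \emph{increases} costs subject to a per-region cap, so the uncertainty structure can make consistent choices cheaper but can never forbid inconsistent ones, and you give no mechanism (nor cost analysis) showing the copies do not break the gap. Second, your plan to tune $\Gamma_j$ so that ``protecting a region is worthwhile exactly when the clause is satisfied'' is left entirely abstract, together with the acknowledged danger that the baseline $\sum_i \underline{c}_i x_i$ term dilutes the gap. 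Neither issue is resolved, so the argument as given does not establish APX-hardness.

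The paper's proof dissolves both problems by inverting the roles you assign. It reduces from vertex cover on 3-regular graphs (APX-hard), letting the pairs correspond to \emph{edges} --- $T_e=\{u_e,v_e\}$ asks which endpoint covers $e$ --- and letting the regions correspond to \emph{vertices}, $P_v=\{v_e : e \ni v\}$, so each item naturally lies in exactly one region and 3-regularity gives $|P_j|=3$ for free, with no copy gadgets. Setting $\underline{\pmb{c}}=\pmb{0}$, $\pmb{d}=\pmb{1}$, $\Gamma_j=1$ kills the baseline term entirely and makes the robust objective \emph{exactly} the number of regions containing a selected item, i.e., the number of distinct vertices used, i.e., the size of the induced vertex cover. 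The reduction is therefore objective-value preserving (optima coincide), so APX-hardness transfers with no further gap analysis. Your instinct to use a bounded-occurrence source problem (you even mention bounded-degree vertex cover) is the right one, but the essential idea you are missing is this choice of costs and budgets that turns the robust objective into a pure ``count of regions touched,'' which is what makes the edge/vertex encoding work.
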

\begin{proof}
We reduce from an instance of the vertex cover problem, which is
APX-hard, even on 3-regular graphs \cite{garey1979computers,alimonti2000some}.

\noindent\textbf{Given:} Graph $G=(V,E)$ 3-regular, $k \in \nn$

\noindent\textbf{Question:} Does there exist a vertex cover of size less or equal to $k$, i.e. a set $S \subseteq V$ such that for all
            $e=\{u,v\} \in E$ it holds that $u \in S$ or $v \in S$, and $|S| \leq k$?

    \begin{figure}[ht]
        \centering
        \begin{tikzpicture}
          %\graph[clockwise, radius=3cm] {
          %  subgraph C_n [n=6,name=A];
          %};

          \graph[clockwise=6, radius=2.3cm] {
              {1 [ultra thick, circle, draw], 2, 3[ultra thick, circle, draw], 4, 5[ultra thick, circle, draw], 6[ultra thick, circle, draw]};

              {1}  --[edge label=a] {2};
              {2}  --[edge label=b] {3};
              {3}  --[edge label=c] {4};
              {4}  --[edge label=d] {5};
              {5}  --[edge label=e] {6};
              {6}  --[edge label=f] {1};
              {1}  --["g", pos=0.2] {4};
              {2}  --["h", pos=0.8] {5};
              {3}  --["i", pos=0.2] {6};
          };

          %\foreach \i [evaluate={\j=int(\i+3)}] in {1,2,3}{
          %    \draw (A \i) -- (A \j);
          %}

          %\draw [red, very thick] (A 1) circle [radius=3.4mm];
          %\draw [red, very thick] (A 2) circle [radius=3.4mm];
          %\draw [red, very thick] (A 3) circle [radius=3.4mm];
          %\draw [red, very thick] (A 5) circle [radius=3.4mm];

          %\foreach \i [evaluate={\j=int(mod(\i+2+4,5)+1)}]% using Paul Gaborit's optimisation
          %   in {1,2,3,4,5}{
          %  \draw (A \i) -- (B \i);
          %  \draw (B \j) -- (B \i);
          %}
        \end{tikzpicture}

        \bigskip

        \begin{tikzpicture}[box/.style={rectangle,draw=black,thick, minimum
            size=0.8cm},]
            \node[box,fill=red!80] at (0,-0.8){$2_a$};
            \node[box,fill=green,line width=3pt] at (0,0){$\mathbf{1_a}$};
            \node[box,fill=red!80] at (1,0){$2_b$};
            \node[box,fill=blue!70,line width=3pt] at (1,-0.8){$\mathbf{3_b}$};
            \node[box,fill=orange] at (2,-0.8){$4_c$};
            \node[box,fill=blue!70,line width=3pt] at (2,0){$\mathbf{3_c}$};
            \node[box,fill=orange] at (3,0){$4_d$};
            \node[box,fill=yellow,line width=3pt] at (3,-0.8){$\mathbf{5_d}$};
            \node[box,fill=pink] at (4,-0.8){$6_e$};
            \node[box,fill=yellow,line width=3pt] at (4,0){$\mathbf{5_e}$};
            \node[box,fill=green] at (5,-0.8){$1_f$};
            \node[box,fill=pink,line width=3pt] at (5,0){$\mathbf{6_f}$};
            \node[box,fill=orange] at (6,-0.8){$4_g$};
            \node[box,fill=green,line width=3pt] at (6,0){$\mathbf{1_g}$};
            \node[box,fill=red!80] at (7,0){$2_h$};
            \node[box,fill=yellow,line width=3pt] at (7,-0.8){$\mathbf{5_h}$};
            \node[box,fill=blue!70] at (8,0){$3_i$};
            \node[box,fill=pink,line width=3pt] at (8,-0.8){$\mathbf{6_i}$};

        \end{tikzpicture}
        \caption{Illustration of the reduction from vertex cover. The big
        circled vertices correspond to a minimum size vertex cover of the graph.
    Below we show the instance of the robust representative selection problem
corresponding to this graph. Each column corresponds to a partition from which
one of the two elements must be selected. The colors correspond to the regions
of the instance. The bold elements are an optimal solution corresponding to the
 shown vertex cover of the graph. Note that only elements in 4 regions (green,
 blue, yellow, pink), corresponding to the vertices 1,3,5,6, are selected.}
    \label{fig:vcred}
    \end{figure}
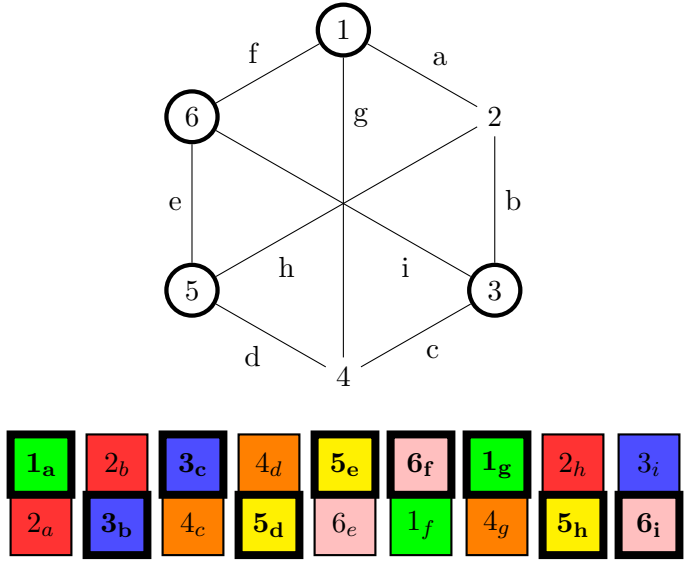

    Given such an instance, we construct an instance of the robust representative
    selection problem with locally budgeted uncertainty fulfilling the
    restrictions stated in the theorem. In Figure~\ref{fig:vcred} we illustrate
    the reduction via an
    example for a concrete vertex cover instance.
    Let $L = |E|$ and $n=2|E|$. 
    For each $e = \{u, v\} \in E$ let $u_e,
    v_e$ be the two elements of $[n]$ in $T_e$, which we associate with the two
    vertices $u$ and $v$. Note that for every vertex $v\in V$ there exist degree of $v$ many
    elements that are associated with this vertex, one corresponding to every edge
    incident to $v$.
    
    For our partition into regions of the locally budgeted uncertainty set, we use exactly those sets of elements
    that correspond to the same vertices in $V$, i.e. we define a region
    $P_v = \{ v_e \colon e \in E, v \text{ incident to }e \}$ for every $v \in
    V$. 
    Note that these sets also form a partition of $[n]$, and we have $K=|V|$. We further set $\pmb{\Gamma} = \pmb{1}$, $\underline{\pmb{c}} =
    \pmb{0}$ and $\pmb{d} = \pmb{1}$.

    We show that there is a vertex cover of size at most $k$ if and only if the
    constructed instance of the robust representative selection problem with
    locally budgeted uncertainty has a solution with objective value at most $k$.
    To see this we first prove the following claim.
    \begin{claim}
        Given a feasible solution $\pmb{x}$ of our instance of the robust representative
        selection problem, the robust objective value is equal to the number of
        regions $P_v$ in which at least one element is selected, i.e.
        \[ | \{ v \in V \colon \exists i \in P_v \text{ with } x_i = 1 \}|. \]
%         \[ | \{ v \in V \colon \exists i \in [n], x_i = 1, i \in P_v \}|. \]
    \end{claim}
    \begin{claimproof}[Proof of Claim]
        First observe that this value can be realized by the adversary by
        selecting for each region $P_v$ with an element $i \in P_v$ such that 
        $x_i=1$ an arbitrary such element $i$ and set $\delta_i = 1$. It
        is easy to see that this is a feasible solution for the adversary and
        the claimed objective value is reached.

        For the upper bound observe that it is only sensible to set 
        $\delta_i > 0$ for $i\in [n]$ with $x_i = 1$. Hence by
        the definition of $\cU$ we have that
%         $ | \{ v \in V \colon \exists i \in [n], \pmb{x}_i = 1, i \in P_v \}| $
        $ | \{ v \in V \colon \exists i \in P_v \text{ with } x_i = 1 \}| $
        is an upper bound on the objective value of the adversary.
    \end{claimproof}

    Now given a vertex cover $S$ of size $k$ we construct a solution $\pmb{x}$ by
    selecting in each $T_e$ the element corresponding to the vertex in the
    vertex cover. If both vertices incident to $e$ are in $S$ we choose one of
    the two elements arbitrarily. Since $|S|=k$ we select elements from at most
    $k$ different regions $P_v$. Hence, by our claim the objective value of the
    robust representative selection problem is less or equal to $k$.

    Given a solution $\pmb{x}$ to the robust representative selection problem
    with objective value $k$, we know by our claim that the elements selected by
    $\pmb{x}$ are contained in exactly $k$ different regions $P_{v_1}, \dots,
    P_{v_k}$.
    We define $S$ to be the set corresponding
    to exactly those $k$ different vertices, i.e. $S = \{v_{1}, \dots, v_{k}\}$. 
    Since for each $T_e, e \in E$ one
    element is selected by $\pmb{x}$, also for every edge $e$ at least one
    incident vertex is contained in $S$, hence $S$ is a vertex cover of size
    $k$.
\end{proof}

\begin{corollary}
The robust problem with locally budgeted uncertainty with arbitrary $K$ is
APX-hard for the shortest path problem on series-parallel graphs, for the minimum
spanning tree problem, and for the $s$-$t$-min-cut problem, even if  
for the regions it holds that $|P_j| \leq 3$ for all $j \in [K]$.
\end{corollary}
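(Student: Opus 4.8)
The plan is to reduce from the robust representative selection problem of Theorem~\ref{thm:nphard}, whose APX-hardness we have already established for instances with $|T_\ell| = 2$, $p_\ell = 1$ for all $\ell\in[L]$, and $|P_j| \le 3$ for all $j\in[K]$. For each of the three target problems I would build a graph in which the feasible solutions (an $s$-$t$ path, a spanning tree, or an $s$-$t$ cut) are in one-to-one correspondence with the selections of exactly one element per group $T_\ell$. The cost data $\underline{\pmb{c}} = \pmb{0}$, $\pmb{d} = \pmb{1}$, $\pmb{\Gamma} = \pmb{1}$ and the regions $P_v$ would be carried over verbatim: each element of the representative selection instance becomes exactly one edge of the new graph, and a region $P_j$ becomes the corresponding set of edges. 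This keeps $|P_j| \le 3$, and by the Claim inside the proof of Theorem~\ref{thm:nphard} the robust objective of any solution is again the number of regions that contain a selected edge. Since the correspondence preserves this quantity exactly, the reduction is gap-preserving and transfers APX-hardness.

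For the shortest path problem I would realize each group $T_\ell = \{a_\ell, b_\ell\}$ as a parallel pair of edges joining two new vertices $v_{\ell-1}$ and $v_\ell$, and then place these $L$ parallel gadgets in series along a path $s = v_0, v_1, \ldots, v_L = t$. The resulting graph is series-parallel by construction, and every $s$-$t$ path uses exactly one edge of each parallel pair, hence encodes a representative selection. For the minimum spanning tree problem I would use the same chain of vertices $v_0, \ldots, v_L$ with the two parallel edges of each pair between consecutive vertices; a spanning tree of this multigraph has exactly $L$ edges and must contain exactly one edge of each pair to remain connected and acyclic, so spanning trees again correspond bijectively to representative selections.

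For the $s$-$t$-min-cut problem I would instead use a parallel composition: for each group $T_\ell$ I create an internal vertex $w_\ell$ and a two-edge path $s \to w_\ell \to t$ whose edges are $a_\ell$ and $b_\ell$, and take all $L$ such paths in parallel between the common terminals $s$ and $t$. Any $s$-$t$ cut must delete at least one edge on every one of these paths, and a minimal cut deletes exactly one edge per path, that is, it selects exactly one element of each group. The one subtlety, which I expect to be the main obstacle, is that cuts are \emph{not} in bijection with selections because non-minimal cuts exist; here I would use that the robust objective equals the number of touched regions and is therefore monotone under adding edges, so an optimal robust cut can be taken minimal without loss of optimality, which restores the correspondence. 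Combining the three constructions with the exact preservation of objective values then yields the claimed APX-hardness.
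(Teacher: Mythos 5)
Your proposal is correct and follows essentially the same route as the paper: a series chain of parallel edge pairs for the shortest path and spanning tree cases, and a parallel composition of two-edge $s$-$t$ paths for the min-cut case, carrying the uncertainty set over edge-for-edge. Your explicit handling of the fact that only \emph{minimal} cuts biject with selections (using monotonicity of the robust objective) is a detail the paper glosses over, but it is the same construction.
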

\begin{proof}
    The result for the shortest path and minimum spanning tree problem follows
    directly from Theorem~\ref{thm:nphard}. To see this, given an instance of
    the representative selection problem with $|T_\ell| = 2$ and $p_\ell=1$ for all $\ell
    \in [L]$, we construct the graph $G$ with vertex set $V = \{0,1,\dots,L\}$
    and edge set $E$ consisting of parallel edges $e_\ell^1, e_\ell^2$ connecting vertex $\ell-1$ with vertex $\ell$ for all $\ell \in [L]$. Here $e_\ell^1, e_\ell^2$ are in one-to-one correspondence with the two
    elements in $T_\ell$. It is now easy to see that both spanning trees and paths
    from nodes $0$ to $L$ in $G$ are in one-to-one correspondence with feasible
    solutions to the original representative selection problem.
    Using the same locally budgeted uncertainty set as for
    the representative selection problem, we find that objective values
    of corresponding solutions remain equal.

    To obtain the result for the $s$-$t$-min-cut problem, observe that the special instance
    of the representative selection problem is equivalent to the $s$-$t$-cut problem in a graph $G$
    where for each part $\ell \in [L]$ of size $2$ we add a special vertex $v_{\ell}$ in addition to 
    $s$ and $t$ and the path from $s$ via $v_{\ell}$ to $t$. Then $s$-$t$-cuts correspond to selecting 
    one of the two edges from each of these paths.
\end{proof}

Note that the above reduction does not exclude the possibility of a
parameterized algorithm with running time $O^{*}(2^{o(K)})$, even if we assume 
the exponential time hypothesis (ETH), since
the number of regions $K$ in the reduction cannot be bounded by the solution
size $k$ of the vertex cover.
In the following we give a direct linear parameterized reduction from 3-SAT
to robust representative selection with locally budgeted uncertainty, which
shows that the running time of our FPT meta-algorithm is essentially tight under
ETH.
\begin{theorem}\label{thm:ethhard}
    Assuming ETH, there is no $O^{*}(2^{o(K)})$ time algorithm for the robust
    representative selection problem with locally budgeted uncertainty, even if
    $|T_\ell| \leq 3$ and $p_\ell=1$ for all $\ell\in[L]$.
\end{theorem}
\begin{proof}
    We reduce from an instance of the well known 3-SAT problem.
    
\noindent\textbf{Given:} A formula $\varphi$ in 3-CNF with variables $x_1, \dots,
            x_{\tilde{n}}$ and $\tilde{m}$ clauses, i.e.
            \[ \varphi = (l_{1,1} \vee l_{1,2} \vee l_{1,3}) \wedge \dots \wedge (l_{\tilde{m},1} \vee
            l_{\tilde{m},2} \vee l_{\tilde{m},3}), \]
            where the $l$ are literals of the variables $\pmb{x}$.
        
\noindent\textbf{Question:} Is there an assignment for $\pmb{x}$ such that $\varphi$ is
            true?

    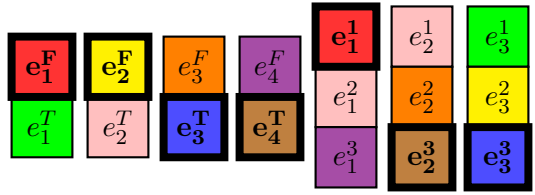
\begin{figure}[ht]
        \centering
        \begin{tikzpicture}
            \node at (0,0) {$\varphi = (\bar{x}_{1} \vee x_2 \vee \bar{x}_4) \wedge (x_2 \vee \bar{x}_3 \vee x_4) \wedge (x_1 \vee \bar{x}_2 \vee x_3)$};

            \node at (0, -1) {$x_1 = 0, x_2=0, x_3=1, x_4=1$};
        \end{tikzpicture}

        \bigskip

        \begin{tikzpicture}[box/.style={rectangle,draw=black,thick, minimum
            size=0.8cm},]

            \node[box,fill=green] at (0,-0.8){$e_1^T$};
            \node[box,fill=red!80,line width=3pt] at (0,0){$\mathbf{e_1^F}$};

            \node[box,fill=pink] at (1,-0.8){$e_2^T$};
            \node[box,fill=yellow,line width=3pt] at (1,0){$\mathbf{e_2^F}$};

            \node[box,fill=orange] at (2,0){$e_3^F$};
            \node[box,fill=blue!70,line width=3pt] at (2,-0.8){$\mathbf{e_3^T}$};

            \node[box,fill=violet!70] at (3,0){$e_4^F$};
            \node[box,fill=brown,line width=3pt] at (3,-0.8){$\mathbf{e_4^T}$};

            \node[box,fill=pink] at (4,-0.4){$e_1^2$};
            \node[box,fill=violet!70] at (4,-1.2){$e_1^3$};
            \node[box,fill=red!80,line width=3pt] at (4,0.4){$\mathbf{e_1^1}$};

            \node[box,fill=pink] at (5,0.4){$e_2^1$};
            \node[box,fill=orange] at (5,-0.4){$e_2^2$};
            \node[box,fill=brown,line width=3pt] at (5,-1.2){$\mathbf{e_2^3}$};

            \node[box,fill=green] at (6,0.4){$e_3^1$};
            \node[box,fill=yellow] at (6,-0.4){$e_3^2$};
            \node[box,fill=blue!70,line width=3pt] at (6,-1.2){$\mathbf{e_3^3}$};

        \end{tikzpicture}
        \caption{Illustration of the reduction from 3-SAT. Above there is an example of a 3-SAT formula with a feasible assignment.
    Below we show the instance of the robust representative selection problem
corresponding to this instance. Each column corresponds to a partition from which
one of the two elements must be selected. The colors correspond to the regions
of the instance. The bold elements are an optimal solution corresponding to the
 shown variable assignment. Note that elements in exactly 4 regions (red,
 yellow, blue, brown), corresponding to the shown feasible variable assignment, are selected.}
    \label{fig:satred}
    \end{figure}

    The exponential time hypothesis (ETH) implies that there does not exist an algorithm to decide 3-SAT
    with running time $2^{o(n)}$, and is widely believed~\cite{woeginger2003exact}.
    We define an instance of our robust problem on the element set $[2 \tilde{n}
    + 3 \tilde{m}]$ in the following way. In Figure~\ref{fig:satred} we illustrate
    the reduction via an
    example for a 3-SAT instance. The 
    partition of $[2\tilde{n}+3\tilde{m}]$ for the representative selection problem consists of $\tilde{n} +
    \tilde{m}$ parts, one for each variable and clause. For each $i \in
    [\tilde{n}]$ the set $T_{i}$ consists of two elements, an element $e^{T}_{i}$  and an element
    $e^{F}_{i}$. These parts are the variable gadgets and selecting $e^{T}_{i}$
    or $e^{F}_{i}$ corresponds to setting $x_{i}$ to true or false respectively.
    For each clause we create a part consisting of exactly three elements, i.e.
    for each $j \in [\tilde{m}]$ the set $T_{\tilde{n}+j}$ consists of the
    elements $e^{1}_{j}$, $e^{2}_{j}$ and $e^{3}_{j}$.
    
    Using the locally
    budgeted uncertainty set and cost structure we will enforce that $e^{i}_{j}$
    can only be selected without inducing additional cost, 
    if the selection in the variable gadget corresponding
    to the variable of literal $l_{j,i}$ corresponds to $l_{j,i}$ being true.
    To this aim, we define the partition for the locally budgeted uncertainty
    set consisting of $K = 2\tilde{n}$ regions $P_{i}^{T}$ and $P_{i}^{F}$ for each $i
    \in [\tilde{n}]$. Selecting an element inside region $P_{i}^{T}$ or
    $P_{i}^{F}$ corresponds to setting the variable $x_i$ to true or false
    respectively.
    We set $\Gamma_{i}^{T} = \Gamma_{i}^{F} = 1$ for all $i \in [\tilde{n}]$ and the
    costs to $\underline{\pmb{c}} = \pmb{0}$ and $\pmb{d} = \pmb{1}$. In a
    similar way as in the proof of Theorem~\ref{thm:nphard}, one can prove the following claim.
    \begin{claim}
        Given a feasible solution $\pmb{x}$ of our instance of the robust representative
        selection problem, the robust objective value is equal to the number of
        regions in which at least one element is selected by $\pmb{x}$.
    \end{claim}

    Based on this, we show that $\varphi$ is feasible, if and only if the
    objective value of our instance is $\tilde{n}$.

    Given a feasible assignment for $\varphi$ we select in each variable gadget
    the corresponding element. Then, since $\varphi$ is true, for each clause $j
    \in [\tilde{m}]$ there is at least one literal $l_{j,i}$ which is true. We
    select the corresponding element $e^{i}_{j}$ in $T_{\tilde{n}+j}$. Observe
    that this selection uses exactly the $\tilde{n}$ parts $P_{i}^{v}$ where $v$
    is $T$ if $x_i$ is true and $v$ is $F$ if $x_i$ is false.  Hence by our
    claim the objective value of this selection is $\tilde{n}$.

    For the other direction first observe, that the objective value of our
    instance cannot be smaller than $\tilde{n}$, since in every variable gadget
    one element must be chosen and each element $e_{i}^{v}$ has its exclusive
    region $P_{i}^{v}$. Now assume that there is a selection with
    robust objective value $\tilde{n}$. Then by our claim for every $i \in
    [\tilde{n}]$ in exactly one of the two regions $P_{i}^{T}$ and $P_{i}^{F}$
    an element of $T_{i}$ is selected. Hence, the truth assignment to $x_{i}$
    induced by the selection in the variable gadget satisfies all the
    clauses, since otherwise in one of the clause gadgets we would have to
    select an element inside an additional region.
\end{proof}

This result is slightly weaker than Theorem~\ref{thm:nphard} in the
sense that we cannot assume $|T_\ell| = 2$ but only $|T_\ell| \leq 3$. The existence
of a $O^{*}(2^{o(K)})$ time algorithm for this case is an open problem.

\begin{theorem}\label{theolog}
Let $k=\max_{\ell\in[L]} |T_\ell|$ and $\Delta=\max_{j\in[K]} |P_j|$. Then the robust representative selection problem with locally budgeted uncertainty and arbitrary $K$ inherits inapproximability results from the set cover problem with maximum cardinality of subsets $\Delta$, and maximum number $k$ of subsets containing any element of the ground set. 
\end{theorem}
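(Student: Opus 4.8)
The plan is to establish a two-way correspondence between instances of the robust representative selection problem and instances of set cover, so that inapproximability transfers in both directions. The theorem statement claims the robust problem \emph{inherits} inapproximability from set cover, so the essential direction is a reduction from set cover to robust representative selection that preserves approximation ratios (an L-reduction or approximation-preserving reduction), in which the parameters $\Delta$ and $k$ of the two problems match up as stated.

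First I would set up the reduction from set cover. Given a set cover instance with ground set $\base = \{e_1,\dots,e_m\}$ and a family of subsets $S_1,\dots,S_K$ covering $\base$, I would create one region $P_j$ of the locally budgeted uncertainty for each subset $S_j$, and one part $T_\ell$ of the representative selection partition for each ground-set element $e_\ell$. The part $T_\ell$ should contain one selectable element for each subset $S_j$ that contains $e_\ell$; choosing that element is the analogue of covering $e_\ell$ using $S_j$. Concretely, for each incidence $(e_\ell \in S_j)$ I place an element into both $T_\ell$ and $P_j$. Then $|T_\ell|$ equals the number of subsets containing $e_\ell$, so $\max_\ell |T_\ell| = k$, and $|P_j|$ equals $|S_j|$, so $\max_j |P_j| = \Delta$, matching the parameters in the theorem. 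Setting $\underline{\pmb{c}}=\pmb{0}$, $\pmb{d}=\pmb{1}$, $\pmb{\Gamma}=\pmb{1}$, and $p_\ell = 1$ reuses exactly the cost structure of the previous proofs.

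The crux is then the same counting claim used in Theorems~\ref{thm:nphard} and~\ref{thm:ethhard}: for any feasible $\pmb{x}$, the robust objective value equals the number of regions in which at least one element is selected. I would invoke this claim (its proof is identical to before, since the budget parameters are unchanged). With the claim in hand, a feasible selection picks, for each element $e_\ell$, one incident subset, and the objective counts the distinct subsets used; this is exactly the size of the set cover formed by those chosen subsets. Conversely, any set cover of size $t$ yields a selection meeting all parts $T_\ell$ (pick for each $e_\ell$ a covering subset from the cover) that touches at most $t$ regions. Thus optimal objective values coincide and the correspondence is value-preserving, so any approximation algorithm for the robust problem yields one with the same ratio for set cover, transferring the inapproximability.

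The main obstacle I anticipate is verifying that the reduction is genuinely approximation-preserving and not merely value-preserving on optima: I must confirm that an arbitrary feasible robust solution of objective value $t$ maps back to a set cover of size \emph{at most} $t$, so that near-optimal robust solutions induce near-optimal covers. This follows from the claim, because the regions touched by a feasible $\pmb{x}$ form a collection of subsets that covers every $e_\ell$ (as some element of each $T_\ell$ is chosen, and that element lies in a region/subset containing $e_\ell$), and the count of touched regions is exactly the objective. The other point requiring care is matching the parameter definitions precisely: I should check the degenerate cases (an element in no subset, or an empty subset) are excluded by the set cover instance, so that $p_\ell=1$ is always satisfiable and the parameters $k$ and $\Delta$ are exactly as claimed.
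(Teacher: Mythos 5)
Your proposal is correct and follows essentially the same route as the paper: an incidence-based reduction from set cover with one part $T_\ell$ per ground element, one region $P_j$ per subset, costs $\underline{\pmb{c}}=\pmb{0}$, $\pmb{d}=\pmb{1}$, $\pmb{\Gamma}=\pmb{1}$, and the counting claim that the robust objective equals the number of regions touched. Your explicit check that an arbitrary feasible solution of value $t$ maps back to a cover of size at most $t$ is a point the paper leaves as ``easily seen,'' so nothing is missing --- if anything you are slightly more careful about approximation preservation than the published argument.
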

\begin{proof}
We use an objective-preserving reduction from the set cover problem.

\noindent\textbf{Given:} A ground set $V$ with $|V|=\tilde{n}$, and a set of subsets $V_s \subseteq V$, $s\in S$ with $|S|=\tilde{m}$.

\noindent\textbf{Question:} Does there exist a set cover of size less or equal to $k$, i.e., a set $C\subseteq S$ with $|C|\le k$ such that $\cup_{s\in C} V_s = V$?

We set $L=\tilde{n}$ and $K=\tilde{m}$. For each $v\in V$ and each $s\in S$, we add an element $(v,s)$ to set $T_v$. Each such element $(v,s)$ belongs to a region $P_s$. We set $\Gamma_s = 1$ for all $s \in S$ and the
costs to $\underline{\pmb{c}} = \pmb{0}$ and $\pmb{d} = \pmb{1}$. Finally, we set $p_v = 1$ for all sets $T_v$.

As an example, let us assume we have $\tilde{n}=4$, $\tilde{m}=3$, and $V_1 = \{1,2\}$, $V_2=\{2,3\}$, $V_3 = \{3,4\}$. Then Table~\ref{tablelog} illustrates the construction.
\begin{table}[h]
\begin{center}
\begin{tabular}{r|rrr|r}
 & $V_1$ & $V_2$ & $V_3$ & \\
\hline
1 & x &   &   & $T_1$ \\
2 & x & x &   & $T_2$ \\
3 &   & x & x & $T_3$ \\
4 &   &   & x & $T_4$ \\
\hline
  &$P_1$ & $P_2$ & $P_3$
\end{tabular}
\caption{Example construction for the proof of Theorem~\ref{theolog}.}\label{tablelog}
\end{center}
\end{table}
By choosing one item from each set $T_v$, we determine by which set $V_s$ we intend to cover it. It can be easily seen that a set cover of size $k$ exists if and only if the robust representative selection problem with locally budgeted uncertainty has an objective value at most $k$. Hence, the reduction is cost-preserving, and the claim follows.
% Note that the size of each region $P_s$ is at most $\tilde{n}$. Hence, if it was possible to approximate the robust representative selection problem in some factor $O(\alpha(n^{max}))$ with $n^{max} = \max_{j \in [K]} |P_j|$, then it is also possible to approximate the set cover problem within $O(\alpha(\tilde{n}))$. As set cover cannot be approximated in $\Omega(\log \tilde{n})$, 
\end{proof}

Note that the inapproximability of set cover under parameters $\Delta$ and $k$ is well-researched (see, e.g., \cite{saket2012new}). If $k=\theta(\log\Delta)$, both problems become hard to approximate within $\Omega(\log\Delta/(\log\log\Delta)^2)$.

\subsubsection{A Polynomial Time Algorithm for the Selection Problem}
\label{sec:selection}

While the results from the previous section indicate that the robust counterpart of even simple combinatorial problems becomes hard, we now show that this is not the case for the selection problem, where $\X = \{ \pmb{x}\in\{0,1\}^n : \sum_{i\in[n]} = p\}$ for some integer $p$.

To this end, use a dynamic program over how many items are taken from every partition. Let $n_j=|P_j|$ be the number of items in region $h\in[K]$. If the number of items $p_j$ taken from partition $P_j$ is fixed, the robust problem can be decomposed:
\[
\min_{\pmb{x}\in X} \max_{\pmb{c}\in\cU} \pmb{c}^t\pmb{x} 
= \sum_{j\in [K]} \min_{\pmb{x}\in\X_j} \max_{\pmb{c}\in\cU_j} \pmb{c}^t\pmb{x} 
=: \sum_{j\in[K]} f_j(p_j)
\]
where $\X_j = \{ \pmb{x}\in\{0,1\}^{n_j} : \sum_{i\in P_j} x_i = p_j\}$ and
$\cU_j = \{ \pmb{c} \in\mathbb{R}^{n_j} : c_i = \underline{c}_i + \delta_i,\
\delta_i \in [0,d_i]\ \forall i\in P_j,\ \sum_{i\in P_j} \delta_i \le
\Gamma_j\}$.

For every $j\in[K]$ and every $p_j\in\{0,1,\ldots,n_j\}$, 
the value $f_j(p_j)$ is the solution of a robust selection problem with continuous budgeted uncertainty set.
In Theorem~\ref{thm:polysel} we explain how the whole table of these values can be precomputed efficiently.

The robust selection problem with locally budgeted uncertainty thus becomes
\begin{align}
\min\ &\sum_{j\in[K]} f_j(p_j) \label{dpstart}\\
\text{s.t. } & \sum_{j\in[K]} p_j = p \\
& p_j \in\mathbb{N}_0 & \forall j\in[K]  \label{dpend}
\end{align}
where we use $f_j(p_j) = \infty$ if $p_j > n_j$

\begin{theorem}\label{thm:polysel}
The robust selection problem with locally budgeted uncertainty with arbitrary
number of regions $K$ can be solved in $O(n \log n + pn)$ time, hence in polynomial time.
\end{theorem}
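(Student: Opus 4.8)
The plan is to prove the two terms of the claimed bound separately: first, precompute the entire table $\bigl(f_j(p_j)\bigr)_{j\in[K],\,0\le p_j\le n_j}$ in time $O(n\log n)$; then solve the allocation problem~(\ref{dpstart}--\ref{dpend}) by a dynamic program in time $O(pn)$.

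For the precomputation I would first put $f_j(p_j)$ into closed form. For a fixed selection $X\subseteq P_j$ of size $p_j$, the region-$j$ adversary maximizes $\sum_{i\in X}\delta_i$ subject to $0\le\delta_i\le d_i$ and $\sum_{i\in P_j}\delta_i\le\Gamma_j$; since raising $\delta_i$ for $i\notin X$ never helps, the optimal increase is $\min\bigl\{\Gamma_j,\textstyle\sum_{i\in X}d_i\bigr\}$, so
\[ f_j(p_j)=\min_{\substack{X\subseteq P_j\\ |X|=p_j}}\Bigl(\sum_{i\in X}\underline{c}_i+\min\bigl\{\Gamma_j,\textstyle\sum_{i\in X}d_i\bigr\}\Bigr). \]
Splitting the inner minimum into its two branches (equivalently, invoking the $\pi_j\in\{0,1\}$ structure of Lemma~\ref{pilemma}) and using the pointwise identity $\min_X\min\bigl(g(X),h(X)\bigr)=\min\bigl(\min_X g,\min_X h\bigr)$ to pull the minimum outside the selection, this separates into
\[ f_j(p_j)=\min\Bigl(A_j(p_j),\ \Gamma_j+B_j(p_j)\Bigr), \]
where $A_j(q)$ is the sum of the $q$ smallest values $\underline{c}_i+d_i$ and $B_j(q)$ the sum of the $q$ smallest values $\underline{c}_i$ over $i\in P_j$. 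Sorting $P_j$ twice and forming prefix sums yields $A_j(q)$ and $B_j(q)$ for all $q\in\{0,\dots,n_j\}$ at once in $O(n_j\log n_j)$ time; summing over regions and using $\sum_j n_j=n$ gives $O(n\log n)$ for the whole table.

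For the second stage I would solve~(\ref{dpstart}--\ref{dpend}) with the recursion $F_j(q)=\min_{0\le t\le\min\{n_j,q\}}\bigl(F_{j-1}(q-t)+f_j(t)\bigr)$, with $F_0(0)=0$, $F_0(q)=\infty$ for $q>0$, and output $F_K(p)$; correctness is immediate from the additive separability of the objective over regions. Its running time is $\sum_{j\in[K]}\sum_{q=0}^{p}\bigl(\min\{n_j,q\}+1\bigr)$.

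The hard part — indeed the only step that is not routine — is showing this dynamic program runs in $O(pn)$ rather than the $O(Kp^2)$ a naive reading suggests. The key estimate is $\sum_{q=0}^{p}\min\{n_j,q\}\le p\,n_j$, which holds whether or not $n_j\le p$ (using $p\le n$); the per-region cost is then at most $p(n_j+1)+1$, and summing over $j$ with $\sum_j n_j=n$ and $K\le n$ collapses the total to $O(p(n+K))=O(pn)$. Combining the two stages yields the claimed running time $O(n\log n+pn)$.
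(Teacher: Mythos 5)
Your proposal is correct and follows essentially the same route as the paper: the same per-region decomposition $f_j(p_j)=\min_{\pmb{x}\in\X_j}\bigl(\underline{\pmb{c}}^t\pmb{x}+\min\{\pmb{d}^t\pmb{x},\Gamma_j\}\bigr)$ computed by two sortings per region, followed by the same knapsack-style dynamic program over regions with the same $O(pn)$ accounting. The only cosmetic difference is that you derive the two-branch form of $f_j$ directly from the adversary's LP rather than citing the $\pi_j\in\{0,1\}$ cases of Lemma~\ref{pilemma}, and you spell out the prefix-sum and $\sum_{q=0}^{p}\min\{n_j,q\}\le p\,n_j$ details more explicitly.
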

\begin{proof}
First we explain how to compute the complete table of values $f_j(p_j)$
for all values $j=1,\dots,K$ and $p_j=0,\dots,p$.
Note that for fixed $j$ and $p_j$ computing
$f_j(p_j)$ is equivalent to solving the robust selection problem with continuous budgeted uncertainty.
Observe that this problem is equivalent to solving
\[ \min_{\pmb{x} \in \X_j} \left(\underline{\pmb{c}}^t\pmb{x} + \min\{ \pmb{d}^t\pmb{x}, \Gamma_j \}\right),  \] 
corresponding to the two cases of $\pi_j=0$ and $\pi_j=1$ in formulation~(\ref{compactstart}-\ref{compactend}).
The optimal solution to this problem can be determined by solving the two instances of 
the selection problem for parameter $p_j$ with costs $\underline{\pmb{c}}$ and 
$(\underline{\pmb{c}} + \pmb{d})$ and taking the solution giving the smaller objective value. Hence, for fixed $j$, all values of $f_j$ can be calculated by sorting the items in $P_j$ once with respect to costs $\underline{\pmb{c}}$, and once with respect to costs $\underline{\pmb{c}}+\pmb{d}$. In total, this requires time $O(\sum_{j\in[K]} n_j \log n_j) = O(\sum_{j\in[K]} n_j \log n) = O(n \log n)$.

We now give a dynamic program solving problems of type~(\ref{dpstart}-\ref{dpend}) in general form,
based on a given table for the values of values $f_j(p_j)$. This
then directly implies our result for the robust selection problem.
Let $T(K',p')$ be defined as
\begin{align*}
T(K',p') := \min\ &\sum_{j\in[K']} f_j(p_j) \\
\text{s.t. } & \sum_{j\in[K']} p_j = p' \\
& p_j \in\mathbb{N}_0 & \forall j\in[K'].
\end{align*}
Then problem~(\ref{dpstart}-\ref{dpend}) is equivalent to computing $T(K, p)$. It holds that $T(1,p') =
f_1(p')$ for all $p'=0,1,\dots,p$. It also holds that
\[ T(K', p') = \min \left\{ T(K'-1, p'-p_{K'}) + f_{K'}(p_{K'}) \colon
p_{K'}=0,1, \dots, \min\{p',n_j\} \right \}. \]
Hence, calculating entry $T(K',p')$ can be done in $O(n_j)$ time, if all preceding entries have already been calculated. In total, this means that $T(K,p)$ can be calculated in $O(\sum_{j\in[K]} \sum_{p'\in[p]} n_j) = O(\sum_{j\in[K]} p n_j) = O(pn)$ time.

Hence in total the running time of our algorithm is $O(n \log n + p n)$.
\end{proof}

\section{Experiments}
\label{sec:experiments}

\subsection{Overview}

We present three experiments to quantify differences between ''classic'' budgeted uncertainty sets and the locally budgeted uncertainty sets proposed in this paper. Experiments 1 and 2 use randomly generated data for the uncertain selection problem, while Experiment 3 is based on real-world data.
% \begin{itemize}
% \item 
In the first experiment, we assume that the uncertainty set is locally budgeted, and consider the benefit of using this information instead of using a classic budgeted set.
% \item 
In the second experiment, the actual regions are not known to the imagined decision maker. Instead, only sampled scenarios are provided. We analyze the differences between solutions based on classic and locally budgeted uncertainty sets fitted to the data.
% \item 
Finally, in the third experiment, we consider the differences between solutions based on classic and locally budgeted uncertainty sets fitted to real-world data for shortest path problems, where nothing is known about the underlying distribution.
% \end{itemize}

% computer environment not relevant for these experiments

\subsection{Experiment 1}

\subsubsection{Setup}

% We generate random selection problems in the following way. 
In this experiment, we focus on randomly generated selection problems.
We fix $n=30$. Given the number of regions $K$, we distribute items into the $K$ regions as uniformly as possible. 
For every item, we generate $\underline{c}_i$ and $d_i$ independently and uniformly from $\{10,\ldots,49\}$. We set $\Gamma_j = 10|P_j|$ and use $K=2,3,4,5$. We generate 10,000 instances using the same random seed for each $K$, (i.e., cost coefficients of these instances are the same for each $K$). We consider all values $p=1,\ldots,29$.

Each instance is solved exactly, using the compact formulation for locally budgeted uncertainty. Additionally, we solve each instance using the classic budgeted uncertainty approach, by ignoring the partition into regions and using $\Gamma=\sum_{j\in[K]} \Gamma_j = 10n$. We measure the robust objective value of both solutions with respect to the locally budgeted uncertainty set.

By this setup, we already know that the approach using the locally budgeted uncertainty set must perform better. The question we answer here is how much we lose by ignoring such local information. As discussed in Section~\ref{sec:intro}, local uncertainty naturally arises in some practical applications. Our experiment simulates the effect of using classic budgeted uncertainty in this case.

\subsubsection{Results}

In Figure~\ref{fig1} we show the ratio of average objective values between the solution found by 
the model using classic budgeted uncertainty, and by the
model using locally budgeted uncertainty, for different values of $p$. The higher the ratio, the higher are the additional costs that arise by ignoring the locally budgeted uncertainty structure.

\begin{figure}[htb]
\begin{center}
\includegraphics[width=0.6\textwidth]{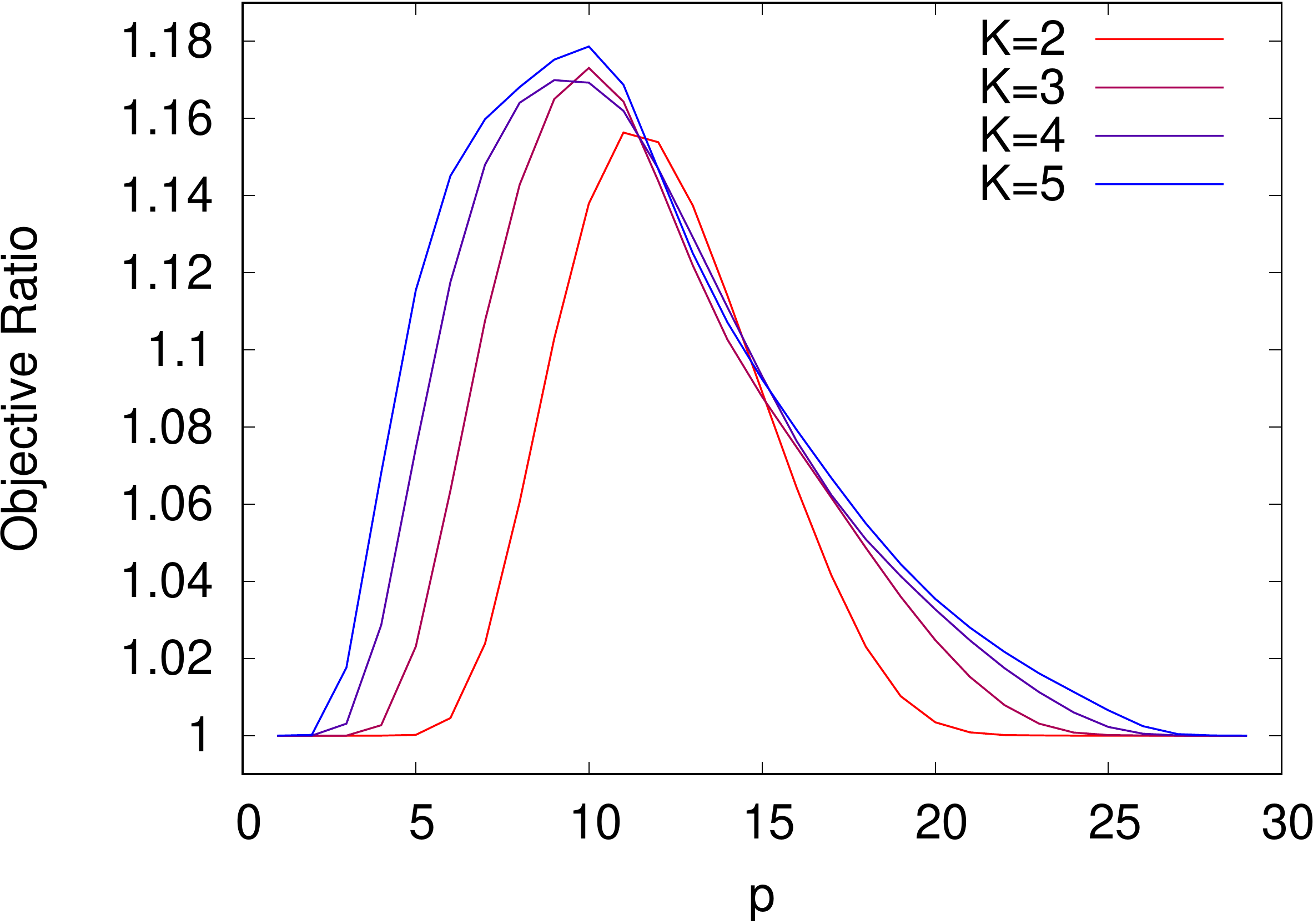}
\caption{Experiment 1: Ratio of between average objective values of solutions based on classic and locally and budgeted uncertainty.}\label{fig1}
\end{center}
\end{figure}

Note that for small ($p=1$) and large ($p=29$) values of $p$, the ratio is close
to one. Hence, the local information does not matter in this setting. The best
choice is to buy the one item $i$ where $\underline{c}_i + d_i$ is smallest (or
to avoid the one item where this value is largest, respectively). For values of
$p$ between these two extremes, solutions differ. The region of values for $p$ where there is a difference between solutions based on classic and locally budgeted uncertainty increases with $K$. For $K=2$ and $p=11$, the average cost difference is $15.6\%$, while this increases to $17.9\%$ for $K=5$ and $p=10$. 

% The larger the value $K$
% (i.e., the more regions exist), the worse becomes the performance of the classic budgeted uncertainty approach in comparison to the locally budgeted approach. The smallest ratio for $K=2$ is $0.823$ and reached for $p=10$, while the smallest ratio for $K=5$ is $0.770$ and reached for $p=8$.

\subsection{Experiment 2}

\subsubsection{Setup}
\label{exp2:setup}

In the previous experiment we considered the effect if the decision maker knows the parameters of a locally budgeted uncertainty set, but chooses to ignore these and use a classic budgeted uncertainty instead. In practice, an uncertainty set is usually not given, but needs to be derived from data.

Hence in this second experiment, we build locally budgeted uncertainty sets in
the same way as before, but then sample $N$ scenarios from the set. To create a
sample scenario $c^k$ with $k\in[N]$, we choose a random value $\gamma_j$ from
$\{0,\ldots,\Gamma_j\}$ uniformly and distribute $\min\{\gamma_j, \sum_{i\in
P_j} d_i\}$ many unit cost increases to all items $i\in P_j$. We do so
iteratively, i.e., we first begin with $c^k=\underline{c}$, and then repeatedly
choose an item from $P_j$ where $c^k_i \le \underline{c}_i + d_i - 1$ at random,
and increase this item's costs by one.

Having constructed $N$ scenarios, we then fit suitable classic and locally budgeted uncertainty sets. 
The focus of this experiment is to derive the regions from the data. We therefore assume that $\underline{c}_i$ and $d_i$ are given for each item. 
% In both cases, we set $\underline{\tilde{c}}_i = \min_{k\in [N]} c^k_i$ and $\tilde{d}_i = \max_{k\in[N]} c^k_i - \underline{\tilde{c}}_i$ as estimates for lower and upper bounds. The classic budgeted uncertainty set then uses $\tilde{\Gamma} = \max_{k\in[N]} \left(\sum_{i\in[n]} c^k_i - \underline{\tilde{c}}_i\right)$ as an estimate for the uncertainty budget. 

To estimate the underlying partition into regions, we can assume
that in a sufficiently large sample $N$, two items from different regions are
not correlated. As each budget constraint applies locally, correlation can only
be found within regions. Based on this idea, we calculate the correlation matrix
using the available sample data. We then consider two items to be connected, if
the absolute value of correlation is above a certain threshold (in this
experiment, we used 0.3). Each connected component then forms its own region.
Note that this way, the number of regions $\tilde{K}$ we use is not prescribed, but estimated from the data.

The classic budgeted uncertainty set uses $\tilde{\Gamma} = \max_{k\in[N]} \left(\sum_{i\in[n]} c^k_i - \underline{c}_i\right)$ as an estimate for the uncertainty budget. For each region, we estimate $\tilde{\Gamma}_j$ in the same way.

We use $n=30$, $p=10$, $K=2,\ldots,5$ and vary the sample size $N$ from 10 to 10,000. As before, for each parameter combination, we construct and solve 10,000 instances.

\subsubsection{Results}

Our results are summarized in Figure~\ref{fig2}. On the horizontal axis, we denote the sample size $N$ (note the logarithmic scale). On the vertical axis, we show average objective values with respect to the original, unknown locally budgeted uncertainty set.

\begin{figure}[htb]
\begin{center}
\includegraphics[width=0.6\textwidth]{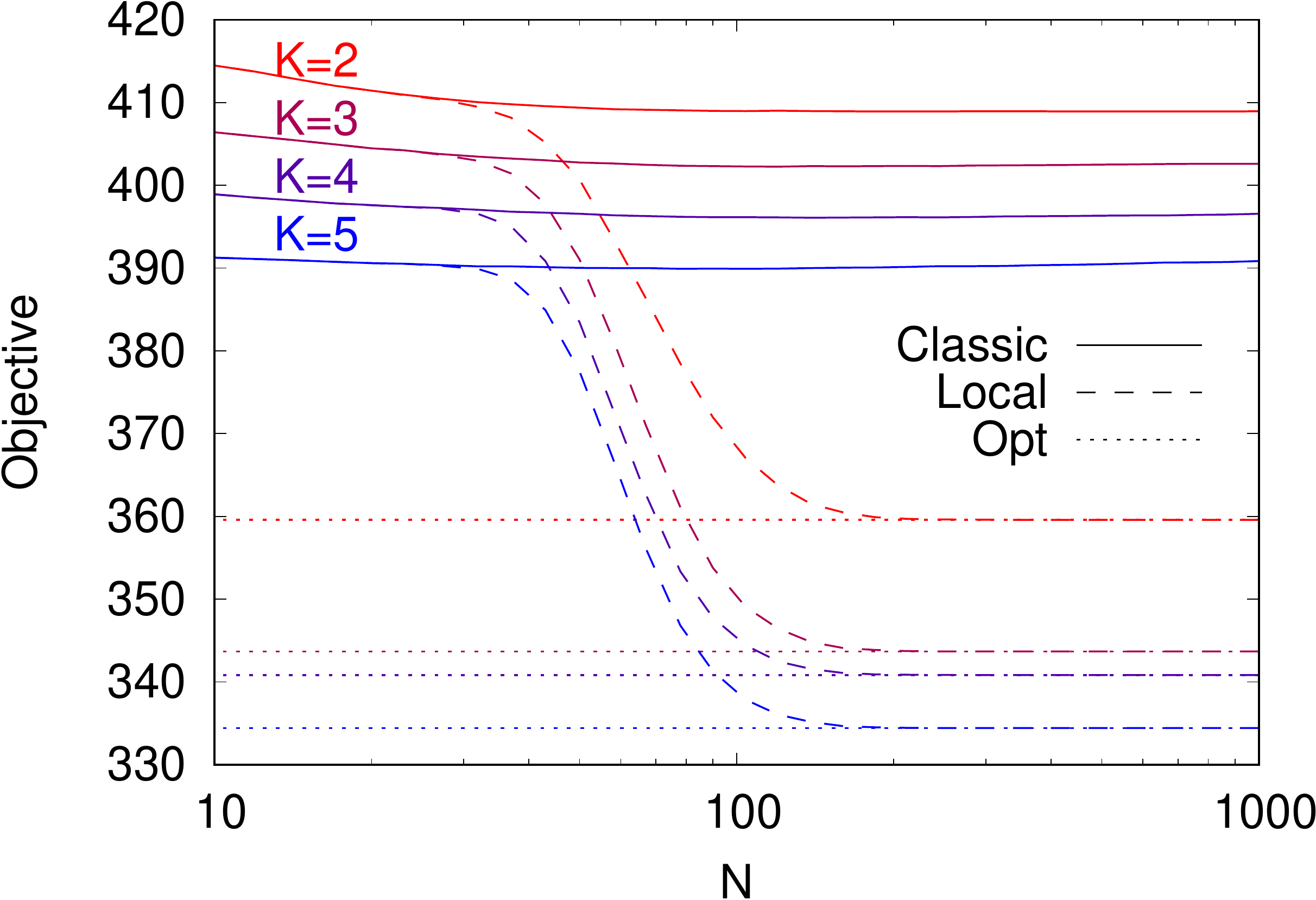}
\caption{Experiment 2: Average objective values of solutions based on locally and classic budgeted uncertainty.}\label{fig2}
\end{center}
\end{figure}

The solid lines indicate the objective value of the solutions based on fitted classic budgeted uncertainty sets, while the dashed lines represent locally budgeted uncertainty sets. The dotted lines indicate the optimal objective value, if the actual uncertainty set were known.

First note that the larger the number of regions $K$, the smaller become objective values overall. For the classic budgeted uncertainty set, the decrease in objective value with increasing sample size $N$ is small, the line is mostly horizontal. This is different for the locally budgeted uncertainty set, where a significant decrease can be observed after the sample size reaches a certain threshold. This begins at around $N=30$, and is completed at approximately $N=110$. We find that even if the locally budgeted uncertainty set is not given explicitly, it is possible to take significant advantage of this model by identifying the corresponding structure in the data.

\subsection{Experiment 3}

\subsubsection{Setup}

While the previous experiments used artificial data that is based on an underlying locally budgeted uncertainty set, we now consider real-world data, where no such underlying structure is known. The data we use was first introduced in \cite{chassein2019algorithms}. It consists of a graph modeling the city of Chicago with 538 nodes and 1308 edges, and 4363 snapshots of traffic speed for each edge over 46 days. Figure~\ref{fig3} shows the structure of the graph.

\begin{figure}[htb]
\begin{center}
\includegraphics[width=0.6\textwidth]{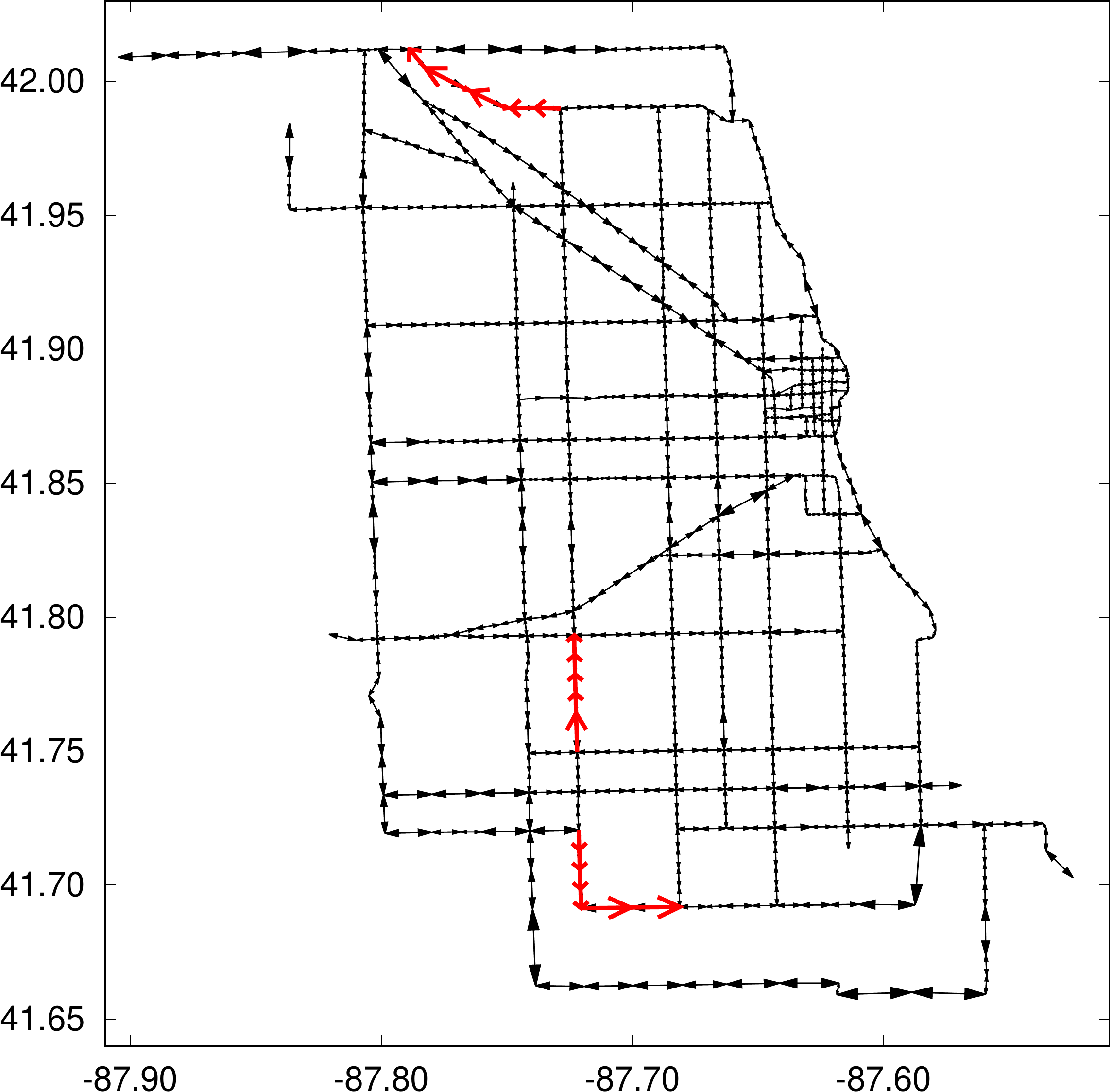}
\caption{Experiment 3: Chicago graph with three regions highlighted.}\label{fig3}
\end{center}
\end{figure}

The data is prepared in the same way as in \cite{chassein2019algorithms}. We use each traffic speed snapshot as a scenario. Of the 4363 scenarios, we use $75\%$ for training our models, and $25\%$ for evaluation. We sample 200 random $s$-$t$ pairs and calculate a shortest path for each pair using each of our models.

The classic budgeted model is trained on the data in the same way as in Experiment~2 (see Section~\ref{exp2:setup}), where $\underline{c}_i$ and $d_i$ are estimated from the data. To model locally budgeted uncertainty sets, we create regions by using edge sequences between any two crossings in one direction. In Figure~\ref{fig3}, we show three such regions in red as an illustration. In total, this results in 546 regions.

We control the degree of conservatism of our two approaches by multiplying the estimated $\tilde{\Gamma}$ value (or $\tilde{\Gamma}_j$ values, respectively) with a budget factor $f$. We use all values of $f$ from 0 to $0.5$ in step size $0.002$.

For each value of $f$ and each model, we solve the 200 shortest path problems and evaluate the path choices in-sample and out-of-sample. We then calculate the average of the average path length and the worst-case path length over the two scenario sets.

\subsubsection{Results}

We show our in-sample results in Figure~\ref{fig4a} and the out-of-sample results in Figure~\ref{fig4b}. On the horizontal axis is the average travel time (in minutes), and the vertical axis is the average worst-case travel time. The results of both models are shown as a line, starting with $f=0$ in the top left, and moving to the right with increasing value of $f$.

\begin{figure}[htb]
\begin{subfigure}[c]{0.49\textwidth}
\includegraphics[width=\textwidth]{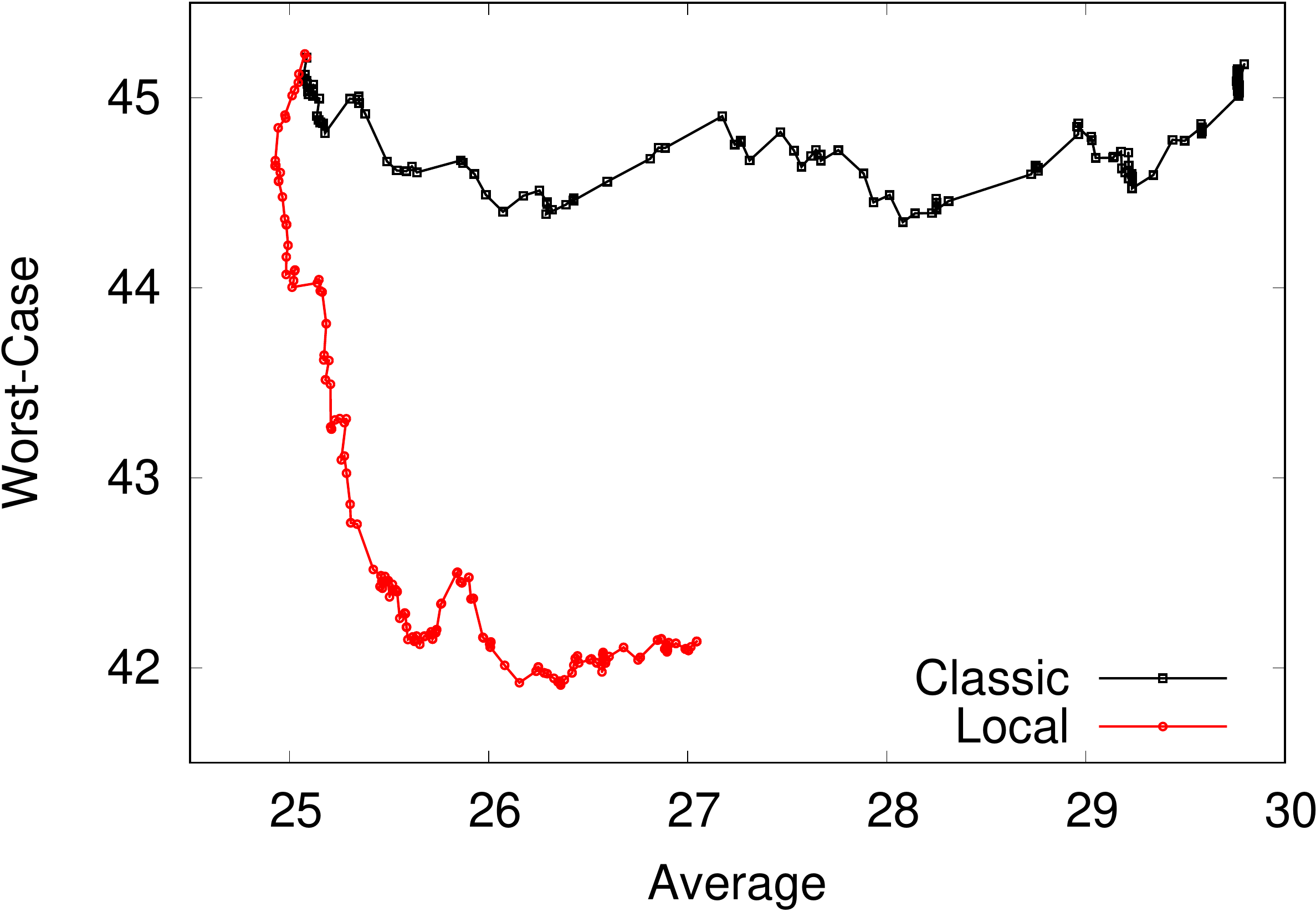}
\subcaption{In-sample results.\label{fig4a}}

\end{subfigure}
\begin{subfigure}[c]{0.49\textwidth}
\includegraphics[width=\textwidth]{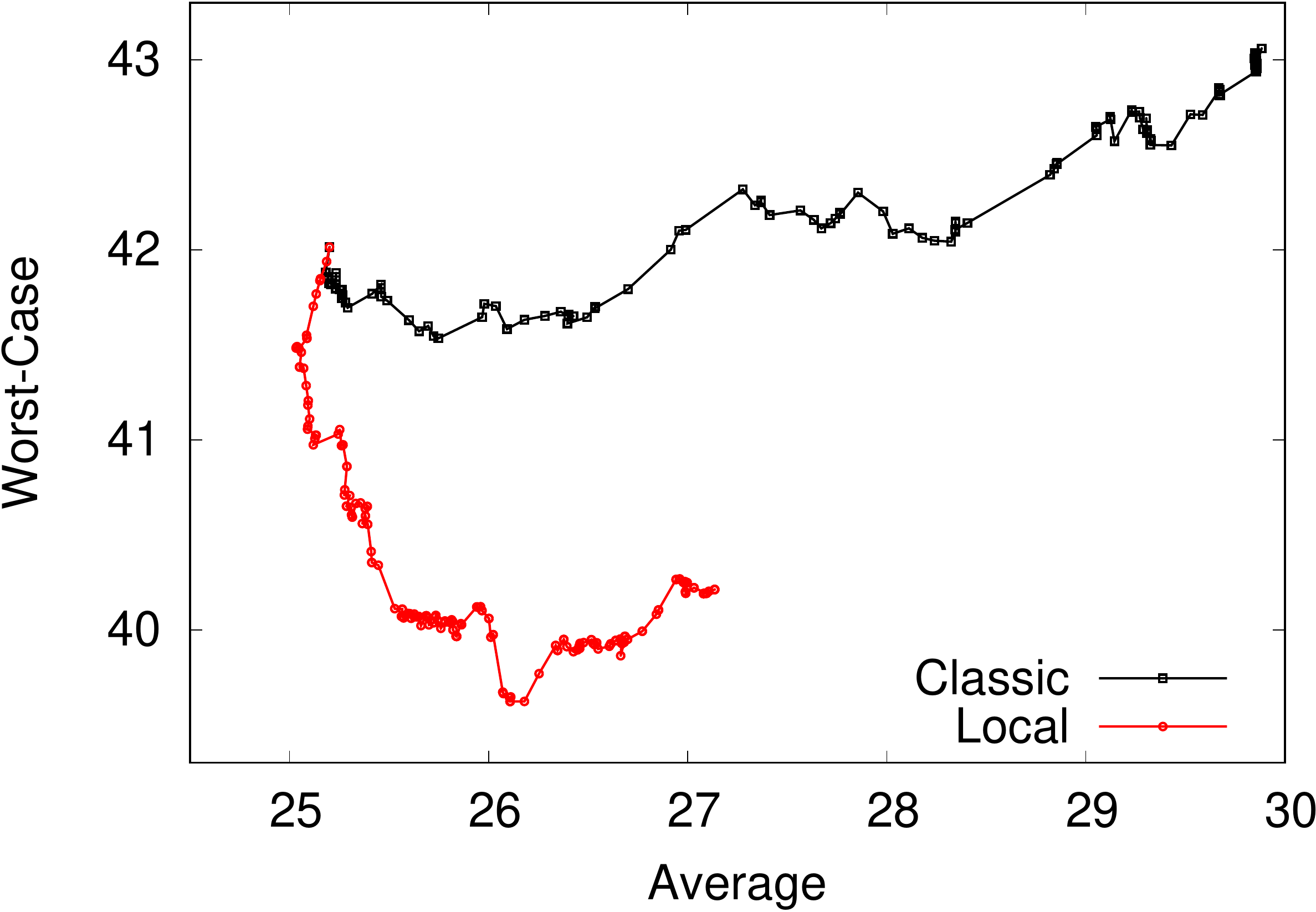}
\subcaption{Out-of-sample results.\label{fig4b}}
\end{subfigure}
\caption{Experiment 3: Average versus average worst-case objective value for different budget factors.}\label{fig4}
\end{figure}

Note that for $f=0$, the classic and the locally budgeted approach result in the same solution, that is, they only optimize for best-case travel times. In an ideal trade-off between average and worst-case travel time, we would expect the lines to reach from the top left corner (low average time, high worst-case time) to the bottom right corner (high average time, low worst-case time).

In Figure~\ref{fig4a} we can see that for the classic approach, no such trade-off can be reached. With increasing budget factor, we increase the average travel time, but do not decrease the worst-case travel times. From the perspective of Pareto optimality, most of the budget factors result in dominated solutions. The locally budgeted uncertainty set, on the other hand, gives a trade-off with increasing budget factor and considerably outperforms the solutions found by the locally budgeted uncertainty set. In the out-of-sample results (Figure~\ref{fig4b}), the classic approach performs even worse, with the curve leading upwards to the top right. The locally budgeted solutions retain a trade-off between average and worst-case time.

Overall, we see that it is possible to model the discrete real-world scenarios more accurately using 
the locally budgeted uncertainty approach, while with the classic budgeted approach, it is not possible to capture the underlying data.

\section{Conclusions and Further Research}
\label{sec:conclusions}

In this paper we introduced a new generalization of budgeted uncertainty sets, where there is a separate uncertainty budget for different regions of items. We showed that for constant number of regions $K$, the robust counterpart remains polynomially solvable if the nominal problem is solvable in polynomial time. For unbounded values of $K$, we show that the robust selection problem can still be solved in polynomial time, while this is not the case for the representative selection problem, even if only one item is chosen from each partition. This extends to other combinatorial problems that include the representative selection problem as a special case. Table~\ref{tab:results} gives an overview to these results. In addition, we show that no parameterized algorithms with running time in $O^*(2^{o(K)})$ exist. To the best of our knowledge, robust optimization problems have not been considered from the perspective of fixed parameter tractability so far.

\begin{table}[htb]
\begin{center}
\begin{tabular}{r|cc}
Problem & $K = O(1)$ & $K=O(n)$ \\
\hline
Unconstrained & P & P \\
Selection & P & P \\
Repr. Selection & P & strongly NPH \\
Spanning Tree & P & strongly NPH \\
$s$-$t$-min-cut & P & strongly NPH \\
Shortest Path & P & strongly NPH 
\end{tabular}
\caption{Overview of complexity results from this paper.}\label{tab:results}
\end{center}
\end{table}

In computational experiments we showed that more general, locally budgeted uncertainty sets can result in better solutions than their classic counterparts using real-world data sets.

Different types of classic budgeted uncertainty sets have been considered in the literature. Instead of the definition used in this paper, where
\[ \cU = \left\{ \pmb{c} = \underline{\pmb{c}} + \pmb{\delta} : \delta_i\in[0,d_i]\ \forall i\in[n],\ \sum_{i\in[n]} \delta_ i \le \Gamma \right\}, \]
it is possible to consider the variant
\[ \cU = \left\{ \pmb{c} : c_i = \underline{c}_i + d_i z_i, z_i\in[0,1]\ \forall i\in[n],\ \sum_{i\in[n]} z_ i \le \Gamma \right\}. \]
All of our theoretical results can be extended to this case. In Theorem~\ref{decomp-theorem}, this means we need to consider $n^K$ instead of $2^K$ subproblems. The hardness results hold as well, as we used $d_i=1$ in the uncertainty sets we constructed.

Furthermore, the locally budgeted uncertainty set proposed in this paper can be seen in the light of data-driven robust optimization (see, e.g., \cite{bertsimas2018data,chassein2019algorithms}), where the aim is to find the most suitable uncertainty set to describe given data. Using local budgets extends the capabilities of classic budgeted uncertainty models, and thus gives more degrees of freedom to describe the data.

From a theoretical perspective, further investigations into the parameterized running time of our
meta-algorithm in Section~\ref{sec:constant} are of interest. 
Note that a minor modification of the proof of Theorem~\ref{thm:ethhard} implies that
no $O^{*}((\sqrt{2}-\epsilon)^{K})$ algorithm for the robust representative selection problem with
locally budgeted uncertainty exists, unless the strong exponential time hypothesis (SETH) fails.
We conjecture that there are combinatorial optimization problems for which the constant $2$ can be improved. Whether this can be done in a meta-algorithm or only for specific combinatorial optimization problems is another interesting open problem.

For the second variant of locally budgeted uncertainty, mentioned in Section~\ref{sec:conclusions},
our gap between positive and negative results is even larger. It is of major interest whether a fixed-parameter
tractable algorithm also exist for this case, or if this slight change in the definition of 
the uncertainty set leads to W[1]-hardness.

\end{document}